\documentclass[12pt,a4paper]{amsart}

\usepackage{hyperref}
\usepackage{amsmath,amssymb,amsbsy,amsfonts,amsthm,latexsym,
amsopn,amstext,amsxtra,euscript,amscd}
\usepackage[all]{xy}
\usepackage{tikz}
\usetikzlibrary{shapes.geometric,positioning}










\newcommand{\FF}{\mathcal F}

\newtheorem{thm}{Theorem}
\newtheorem{cor}{Corollary}
\newtheorem{lem}[thm]{Lemma}

\begin{document}

\title{A note on product sets of rationals}

\author{Javier Cilleruelo}
\address{Instituto de Ciencias Matem\'aticas
 (CSIC-UAM-UC3M-UCM) and Departamento de Matem\'aticas, Universidad
 Aut\'onoma de Madrid, 28049 Madrid, Spain }



\begin{abstract}
Bourgain, Konyagin and Shparlinski obtained a lower bound for the size of the product set $AB$ when $A$ and $B$ are sets of positive rational numbers with numerator and denominator less or equal than $Q$. We extend and slightly improve that lower bound using a different approach. 
\end{abstract}

\keywords{Product sets, quotient sets, divisor function, Farey fractions}
\subjclass[2000]{11B30}

\maketitle


\section{Introduction}
Bourgain, Konyagin and Shparlinsky \cite{BKS} obtained a lower bound for the size of the product of two sets of rational numbers $$A,B\subset \FF_Q=\{q/q':\ 1\le q,q'\le Q\}$$ and they applied it to the study of the distribution of elements of multiplicative groups in residue rings. See  \cite{CRR}  and \cite{CG} for related results and more applications of this useful inequality.

\

\noindent \textbf{Theorem A} (BKSh).\label{BKS} {\it If $A,B\subset \mathcal F_{Q}$ then
\begin{equation}\label{AB}|AB|\ge |A||B|\exp\left (-(9+o(1))\log Q/\sqrt{\log \log Q}\right ),\end{equation}
where $o(1)\to 0$ when $Q\to \infty$.}

%
%
\

For any real numbers $Q,Q'\ge 1$ let
$\FF_{Q,Q'}$ denotes the set of rational numbers
$$\FF_{Q,Q'}=\{q/q':\ 1\le q\le Q,\ 1\le q'\le Q'\}.$$
We give the following result which extends and slightly improves Theorem A.
\begin{thm}\label{main} If $A,B\subset \mathcal F_{Q,Q'}$  then
$$|A/B|\ge |A||B|\exp\left (-(2\sqrt{\log 2}+o(1))\log(QQ')/\sqrt{\log \log(QQ')}\right ),$$
where $o(1)\to 0$ when $QQ'\to \infty$.
\end{thm}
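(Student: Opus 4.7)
My plan is to upper-bound the multiplicative energy
$$E(A,B) = |\{(a,b,a',b') \in A \times B \times A \times B : a/b = a'/b'\}| = \sum_r f(r)^2,$$
where $f(r) = |\{(a,b) \in A \times B : a/b = r\}|$ satisfies $\sum_r f(r) = |A||B|$. Cauchy--Schwarz then gives $|A/B| \ge (|A||B|)^2 / E(A,B)$, so setting $K = QQ'$, the theorem reduces to the bound
$$E(A,B) \le |A||B|\,\exp\bigl((2\sqrt{\log 2}+o(1))\log K / \sqrt{\log\log K}\bigr).$$

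The first step is a pointwise bound on $f(r)$ via a divisor-function count. Writing each $a = u_a/v_a$ and $b = u_b/v_b$ in lowest terms with $u_a,u_b \in [1,Q]$ and $v_a,v_b \in [1,Q']$, the equation $a/b = m/n$ in lowest terms is equivalent to $u_av_b = km$ and $v_au_b = kn$, where $k = \gcd(u_a,u_b)\gcd(v_a,v_b) \le K/\max(m,n)$. Counting the factorizations of $km$ and $kn$ by the divisor function, and applying the elementary estimate $\min(Q/\alpha,Q'/\beta) \le \sqrt{K/(\alpha\beta)}$, one obtains
$$f(m/n) \le K\tau(m)\tau(n)/\sqrt{mn}.$$

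The second step bounds $E(A,B)$ via a threshold decomposition of the rationals $r = m/n$ according to the size of $\tau(mn) = \tau(m)\tau(n)$. I fix a threshold $T$ (ultimately $\log T = (2\sqrt{\log 2}+o(1))\log K/\sqrt{\log\log K}$) and split $E = E_{\mathrm{typ}} + E_{\mathrm{atyp}}$, where $E_{\mathrm{typ}}$ corresponds to $\tau(mn) \le T$. For the typical contribution, I combine the pointwise bound with the identity $1/\sqrt{m_{a/b}n_{a/b}} = \gcd(u_a,u_b)\gcd(v_a,v_b)/\sqrt{u_au_bv_av_b}$ and the trivial estimate $\gcd(u,u')/\sqrt{uu'} \le 1$, arranging the summation over $A \times B$ so as to absorb the normalizing factor of $K$ into the gcd identity. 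For the atypical contribution, I invoke the Hardy--Ramanujan / Erd\H{o}s--Kac large-deviation bound
$$|\{n \le N : \tau(n) > T\}| \le N\,\exp\bigl(-(1+o(1))(\log T)^2 / (2(\log 2)^2 \log\log N)\bigr),$$
applied to the integers $mn \le K^2$. With the chosen $T$, the atypical density becomes super-polynomially small, rendering $E_{\mathrm{atyp}}$ negligible. The main technical obstacle is the ``absorption of $K$'' on the typical side: a naive summation yields $E_{\mathrm{typ}} \ll KT|A||B|$, which is too weak; a delicate rearrangement exploiting the gcd identity reduces the prefactor from $K$ to $1$, and it is this refinement --- together with the sharp form of the large-deviation inequality --- that delivers the constant $2\sqrt{\log 2}$, improving on the constant $9$ from the approach of Bourgain--Konyagin--Shparlinski.
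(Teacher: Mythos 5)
Your reduction via Cauchy--Schwarz, $|A/B|\ge (|A||B|)^2/E(A,B)$, is fine, but the energy bound it requires is not established, and the two mechanisms you propose for it do not engage. First, the threshold decomposition is vacuous: since $\tau(N)\le \exp((\log 2+o(1))\log N/\log\log N)$ for \emph{every} integer $N$, every $mn\le K^2$ satisfies $\tau(mn)\le \exp((2\log 2+o(1))\log K/\log\log K)$, which is $\exp(o(\log T))$ for your choice $\log T\asymp \log K/\sqrt{\log\log K}$. So the ``atypical'' set $\{\tau(mn)>T\}$ is empty, the Hardy--Ramanujan large-deviation input does no work, and the $\sqrt{\log\log K}$ cannot arise from where you say it arises. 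Second, and more seriously, the ``absorption of $K$'' on the typical side is exactly the missing proof. Your own tools give only
$$E_{\mathrm{typ}}\ \le\ K\,\Bigl(\max_{mn\le K^2}\tau(mn)\Bigr)\sum_{(a,b)\in A\times B}\frac{\gcd(u_a,u_b)\gcd(v_a,v_b)}{\sqrt{u_au_bv_av_b}}\ \le\ K\,T_0\,|A||B|,$$
and the prefactor $K$ cannot be removed by any formal rearrangement of this sum: each summand equals $1$ on diagonal-type pairs (e.g.\ $u_a=u_b$, $v_a=v_b$), and for sparse sets such as $A=B=\{1/1\}$ the sum is exactly $|A||B|$, not $|A||B|/K$. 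The loss comes from the pointwise bound $f(m/n)\le K\tau(m)\tau(n)/\sqrt{mn}$, which overcounts by a factor up to $K$ whenever $A$ and $B$ do not fill out $\mathcal F_{Q,Q'}$. With the factor $K$ present, the conclusion $|A/B|\ge |A||B|/(K\exp(o(\log K)))$ is no stronger than the trivial $|A/B|\ge\max(|A|,|B|)$.

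The structural point you are missing is that a \emph{single} application of a divisor-function bound cannot produce the exponent $\log K/\sqrt{\log\log K}$; it naturally produces $\log K/\log\log K$ times a power of $K$. The paper's proof generates the square root by an induction: it decomposes $A\times B$ according to $r=\gcd(a,b)$, $s=\gcd(a',b')$, observes that each class either is small or lands in $A_{r/s},B_{r/s}\subset\mathcal F_{Q/r,Q'/s}$ where the whole argument can be rerun, and after $n$ iterations obtains $|A/B|\ge |A||B|/\bigl((4T)^{n+1}(QQ')^{1/n}(1+\log(QQ'))\bigr)$; optimizing $n\sim\sqrt{\log\log(QQ')/\log 2}$ balances $T^{n}$ against $(QQ')^{1/n}$ and yields the constant $2\sqrt{\log 2}$. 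Your outline contains no iterative or inductive step playing this role, so even granting the unproved ``delicate rearrangement,'' the exponent $\sqrt{\log\log K}$ would not appear. To salvage an energy-based version you would need to iterate your gcd decomposition in an analogous way, at which point you are essentially reproducing the paper's argument.
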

Taking $Q'=Q$ and the set $1/B=\{b^{-1}:\ b\in B\}$ instead of $B$ we improve the constant in \eqref{AB}.
\begin{cor}\label{cor}If $A,B\in \mathcal F_{Q}$, then
$$|AB|\ge |A||B|\exp\left (-(4\sqrt{\log 2}+o(1) )\log Q/\sqrt{\log
\log Q}\right).$$
\end{cor}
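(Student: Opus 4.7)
The plan is to deduce Corollary~\ref{cor} directly from Theorem~\ref{main} by taking $Q' = Q$ and substituting $1/B := \{b^{-1} : b \in B\}$ in place of $B$, exactly as the author suggests in the sentence preceding the corollary.

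The observation that makes this work is that $\FF_{Q,Q} = \FF_Q$ is closed under inversion: if $b = q/q' \in \FF_Q$ with $1 \le q, q' \le Q$, then $b^{-1} = q'/q \in \FF_Q$. Since $x \mapsto x^{-1}$ is a bijection on the positive rationals, we have $1/B \subset \FF_{Q,Q}$ and $|1/B| = |B|$. Moreover, $A/(1/B) = \{a \cdot b : a \in A,\, b \in B\} = AB$, so Theorem~\ref{main} applied to the pair $(A, 1/B)$ inside $\FF_{Q,Q}$ immediately yields
\[
|AB| \;\ge\; |A||B|\,\exp\!\left(-(2\sqrt{\log 2} + o(1))\,\frac{\log(Q^2)}{\sqrt{\log\log(Q^2)}}\right).
\]

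The only remaining task is to put the exponent into the shape advertised in the corollary. Using $\log(Q^2) = 2\log Q$ and $\log\log(Q^2) = \log\log Q + \log 2 = (1+o(1))\log\log Q$ as $Q \to \infty$, the ratio simplifies to
\[
\frac{\log(Q^2)}{\sqrt{\log\log(Q^2)}} \;=\; (2+o(1))\,\frac{\log Q}{\sqrt{\log\log Q}},
\]
so the exponent becomes $(4\sqrt{\log 2} + o(1))\log Q/\sqrt{\log\log Q}$, matching the bound claimed.

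There is no genuine obstacle in this deduction: all the analytic content is carried by Theorem~\ref{main}, while the corollary only needs the trivial closure of $\FF_Q$ under inversion together with the routine asymptotic expansion of $\log\log(Q^2)$.
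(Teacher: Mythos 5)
Your proposal is correct and follows exactly the route the paper intends: take $Q'=Q$, replace $B$ by $1/B$ (legitimate since $\FF_Q$ is closed under inversion), note $A/(1/B)=AB$, and absorb the constant from $\log\log(Q^2)=(1+o(1))\log\log Q$ into the $o(1)$. The paper states this deduction in one sentence before the corollary, and your write-up just supplies the same routine details.
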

\section{Proof of Theorem \ref{main}}
For any pair of sets $A,B\subset \FF_{Q,Q'}$ and $\gcd(r,s)=1$
we define the sets
\begin{eqnarray*}\mathcal M(A\times B,r/s)&=&\{(a/a',b/b')\in A\times B:\ \gcd(a,b)=r,\ \gcd(a',b')=s\}\\ A_{r/s}&=&\{a/a'\in A,\ r\mid a,\ s\mid
  a'\}\\ B_{r/s}&=&\{b/b'\in B,\ r\mid b,\ s\mid
  b'\}.\end{eqnarray*}

  It is clear that $\mathcal M(A\times B,r/s)\subset
  A_{r/s}\times B_{r/s}$, so we have
  \begin{equation}\label{times}
|\mathcal M(A\times B,r/s)|\le |A_{r/s}||B_{r/s}|.
  \end{equation}

  We claim that each $c/d\in A/B$ (assume that $\gcd(c,d)=1$) has at most $\tau(c)\tau(d)$ representation as \begin{equation}\label{cd}\frac cd=\frac{a/a'}
  {b/b'}\end{equation} with $(a/a',b/b')\in \mathcal M(A\times B,r/s)$.
Indeed we observe that \eqref{cd} implies
 $\frac cd=\frac{a_0b_0'} {b_0a_0'}$
  where $a_0=a/r,\quad b_0=b/r,\quad a_0'=a_0/s,\quad b_0'=b_0/s.$
    Since  $\gcd(c,d)=1$ and $\gcd(a_0b_0',a_0'b_0)=1$ then $c=a_0b_0'$ and $d=a_0'b_0$, which proves the claim.

  Note that
$c=a_0b_0'\le QQ'$ and $d=a_0'b_0\le QQ'$, thus the claim implies the inequality
\begin{equation}\label {eq1}|\mathcal M(A,B,r/s)|\le
T^2|A/B|,\end{equation} where $T=T(QQ')$ and $T(x)$ is the function \begin{equation*}\label{T}T(x)=\max_{m\le x}\tau(m).\end{equation*}

Using \eqref{times}, \eqref{eq1} and the well known inequality $$\sum_{\substack{1\le r,s\\rs\le x}}1\le x(1+\log x)$$ we get
\begin{eqnarray}\label{fromula}|A||B|&=&\sum_{\substack{rs\le x\\ (r,s)=1}}|\mathcal
M(A,B,r/s)|+\sum_{\substack{rs> x\\(r,s)=1}}|\mathcal M(A,B,r/s)|\\
&\le & T^2|A/B|x(1+\log
x)+\sum_{\substack{rs>x\\(r,s)=1}}|A_{r/s}||B_{r/s}|\nonumber \end{eqnarray}
for any real number $x\ge 1$.
If $x$ is such that the last sum is less than $|A||B|/2$ then we get
\begin{equation}\label{ine}|A/B|\ge \frac{|A||B|}{2T^2x(1+\log
x)}.\end{equation}
Now we are ready to prove the key Lemma.
\begin{lem}\label{n}For any $n\ge 1$ and for any $A,B\in
\FF_{Q,Q'}$ with real numbers $Q,Q'\ge 1$, we have \begin{equation}\label{induction}|A/B|\ge
\frac {|A||B|}{(4T)^{n+1}(QQ')^{1/n}(1+\log(QQ'))}\end{equation} where
$T=\max_{m\le QQ'}\tau(m).$
\end{lem}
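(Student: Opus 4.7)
The plan is a straightforward induction on $n$, with the base case $n=1$ obtained directly from \eqref{ine} by taking $x=QQ'$. In that case every coprime pair $(r,s)$ with $r\le Q,s\le Q'$ automatically satisfies $rs\le QQ'$, so the tail sum in \eqref{fromula} is empty and we get $|A/B|\ge|A||B|/(2T^2QQ'(1+\log QQ'))$, which in turn is at least $|A||B|/((4T)^2QQ'(1+\log QQ'))$ since $(4T)^2\ge 2T^2$.

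For the inductive step I would fix a coprime pair $(r,s)$ with $r\le Q$, $s\le Q'$ and observe that, writing $\tilde A_{r/s}=(s/r)A_{r/s}$ and $\tilde B_{r/s}=(s/r)B_{r/s}$, the sets $\tilde A_{r/s},\tilde B_{r/s}$ lie inside $\mathcal F_{\lfloor Q/r\rfloor,\lfloor Q'/s\rfloor}$, have the same cardinalities as $A_{r/s},B_{r/s}$, and satisfy $\tilde A_{r/s}/\tilde B_{r/s}=A_{r/s}/B_{r/s}\subseteq A/B$. Applying the inductive hypothesis at level $n-1$ (and using $T(QQ'/(rs))\le T$ and $1+\log(QQ'/(rs))\le 1+\log QQ'$) then yields the pointwise bound
$$|A_{r/s}||B_{r/s}|\le (4T)^{n}\Bigl(\frac{QQ'}{rs}\Bigr)^{1/(n-1)}(1+\log QQ')\,|A/B|.$$

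Next I would insert this into \eqref{fromula} with $x=(QQ')^{1/n}$, so that the ``diagonal'' term contributes at most $T^{2}(QQ')^{1/n}(1+\log QQ')|A/B|$, which is already below $\tfrac12(4T)^{n+1}(QQ')^{1/n}(1+\log QQ')|A/B|$. The whole estimate then reduces to an inequality of the form
$$|A||B|\le |A/B|\Bigl(T^{2}x(1+\log x)+(4T)^{n}(1+\log QQ')\,S_{n}(x)\Bigr),$$
where $S_{n}(x)=\sum_{rs>x,(r,s)=1}(QQ'/(rs))^{1/(n-1)}$, after which a direct rearrangement gives the desired bound provided the $S_{n}(x)$ term is under control.

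The main obstacle — and the only delicate point — is controlling $S_{n}(x)$ together with the full tail, because parametrising by $m=rs$ gives $S_{n}(x)\le T\sum_{x<m\le QQ'}(QQ'/m)^{1/(n-1)}$, whose naive integral estimate is of order $QQ'$ rather than $(QQ')^{1/n}$. To fix this I would combine the inductive bound with the trivial bound $|A_{r/s}||B_{r/s}|\le (QQ'/(rs))^{2}$, applying whichever is smaller on each dyadic range of $rs$: the inductive bound for $rs$ below the crossover $rs\sim QQ'/((4T)^{n}(1+\log QQ')|A/B|)^{(n-1)/(2n-3)}$, and the trivial bound (whose tail sum is convergent of order $(QQ')^{2}T/x$) above it. Combining this with the trivial lower bound $|A/B|\ge\sqrt{|A||B|}$ to handle the regime where $|A||B|$ is very small, and with \eqref{ine} in the regime where the tail is below $|A||B|/2$, closes the induction.
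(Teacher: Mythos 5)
Your base case is fine, and so is the rescaling step (applying the level-$(n-1)$ bound to $A_{r/s},B_{r/s}$ viewed inside $\mathcal F_{\lfloor Q/r\rfloor,\lfloor Q'/s\rfloor}$, which yields the pointwise bound $|A_{r/s}||B_{r/s}|\le (4T)^n(QQ'/(rs))^{1/(n-1)}L\,|A/B|$ with $L=1+\log(QQ')$). But the inductive step has a genuine gap exactly at the point you flag, and your proposed patch does not repair it. Taking the minimum of your pointwise inductive bound and the trivial bound $|A_{r/s}||B_{r/s}|\le (QQ'/(rs))^2$, split at the crossover, the best available estimate for the tail is roughly $T\,QQ'\bigl((4T)^n L\,|A/B|\bigr)^{(n-1)/(2n-3)}$ (up to logarithms), no matter how you choose $x$ or the dyadic splitting. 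In the case where this cannot be shown to be $\le |A||B|/2$ (so \eqref{ine} is unavailable), the only conclusion it gives is $|A/B|\gtrsim \bigl(|A||B|/(T\,QQ')\bigr)^{2-1/(n-1)}/((4T)^n L)$, which misses the target $|A||B|/((4T)^{n+1}(QQ')^{1/n}L)$ by a positive power of $QQ'$: even in the most favourable regime $|A||B|\approx (QQ')^2$ you only get about $(QQ')^{2-1/(n-1)-o(1)}$ against the required $(QQ')^{2-1/n-o(1)}$, and for smaller $|A||B|$ the shortfall is worse, while the fallback $|A/B|\ge\sqrt{|A||B|}$ is weaker still. So the failure is structural, not a matter of optimizing the crossover.

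The missing idea is that the tail must be bounded by a quantity proportional to $|A||B|$ itself, not by absolute quantities like $(QQ'/(rs))^2$ nor by multiples of $|A/B|$ summed over all $(r,s)$ (that sum unavoidably carries a factor of order $QQ'$). The paper does this with a dichotomy: either some single pair $(r,s)$ satisfies \eqref{cota}, i.e. $|A_{r/s}||B_{r/s}|$ is large \emph{relative to} $|A||B|$, in which case one recurses into that one pair — the induction hypothesis is used only there, as a lower bound for $|A/B|$ via $|A_{r/s}/B_{r/s}|$, never as a pointwise upper bound to be summed; or else every $|A_{r/s}||B_{r/s}|$ is small relative to $|A||B|$, and then Cauchy--Schwarz together with the divisor-sum estimate $\sum_{r,s}|A_{r/s}|\le T|A|$ (inequality \eqref{t}) bounds the tail by $|A||B|\,T^{1/2}(QQ')^{1/(2n(n+1))}/(2x^{1/(2n)})$, which is $\le |A||B|/2$ for $x=T^n(QQ')^{1/(n+1)}$. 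Your proposal never invokes \eqref{t} nor any threshold relative to $|A||B|$, and that is precisely why your tail cannot be brought below $|A||B|/2$ in general; incorporating that dichotomy and the mass bound \eqref{t} is what closes the induction.
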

\begin{proof}  We proceed by induction on $n$: trivially, since $|B|\le QQ'$  we have
$$|A/B|\ge |A|\ge\frac {|A||B|}{QQ'},$$
which proves \eqref{induction} for $n=1$. Suppose that Lemma \ref{n} is true for some $n\ge 1$.

If there is $r/s$ such that
\begin{eqnarray}\label{cota}|A_{r/s}||B_{r/s}|\ge \frac{(QQ')^{\frac
1{n(n+1)}}}{4T (rs)^{1/n}}|A||B|\end{eqnarray}  we use induction for the sets
$A_{r/s},B_{r/s}\subset \mathcal F_{Q/r,Q'/s}$. By observing  that the function $T(x)=\max_{m\le x}\tau(m)$ is a non decreasing function we have
\begin{eqnarray*}|A/B|&\ge &|A_{r/s}/B_{r/s}|\\ (\text{by induction hypothesis}) &\ge &
\frac {|A_{r/s}||B_{r/s}|}{(4T)^{n+1}((Q/r)(Q'/s))^{1/n}(1+\log((Q/r)(Q'/s)))}\\
( \text{by }\eqref{cota})\ &\ge &\frac{|A||B|}{(4T)^{n+2}(QQ')^{1/(n+1)}(1+\log(QQ'))}.\end{eqnarray*}

Thus, we assume that
$$|A_{r/s}||B_{r/s}|<
\frac{(QQ')^{\frac 1{n(n+1)}}}{4T(rs)^{1/n}}|A||B|$$ for any
$r/s,\ (r,s)=1$. In this case  we have
\begin{eqnarray}\label{rs}\sum_{rs>x}|A_{r/s}||B_{r/s}|&\le &\max_{rs>x}(|A_{r/s}||B_{r/s}|)^{1/2}\sum_{rs>x}|A_{r/s}|^{1/2}|B_{r/s}|^{1/2}\nonumber \\ &\le &  \frac{(QQ')^{\frac
1{2n(n+1)}}}{2T^{1/2}x^{\frac 1{2n}}}(|A||B|)^{1/2}\left
(\sum_{r,s}|A_{r/s}|\right )^{1/2}\left (\sum_{r,s}|B_{r/s}|\right
)^{1/2}.\end{eqnarray} To estimate the sums in the brackets we have
\begin{eqnarray}\label{t}\sum_{\substack{r,s}}|A_{r/s}|&=&\sum_{q/q'\in A}\sum_{\substack{r,s\\ r\mid q,\ s\mid q'}}1
\le  \sum_{q/q'\in A}\tau(qq')
\le |A|T.\end{eqnarray}
Putting in \eqref{rs} the estimate \eqref{t} and the analogous for $\sum_{r,s}|B_{r/s}|$ we have
$$\sum_{rs>x}|A_{r/s}||B_{r/s}|\le |A||B|\frac{T^{1/2}(QQ')^{\frac
1{2n(n+1)}}}{2x^{\frac 1{2n}}}.$$

Taking $x=T^n(QQ')^{\frac 1{n+1}}$ we get
$$\sum_{rs>x}|A_{r/s}||B_{r/s}|\le |A||B|/2.$$
Then \eqref{ine} applies and noting that $\log x\le \log ((QQ')^{n+\frac 1{n+1}})\le 2n\log (QQ')$ we get
\begin{eqnarray*}|A/B|&\ge &\frac{|A||B|}{2T^2x(1+\log x)}\\ &\ge &\frac{|A||B|}{2T^{n+2}(QQ')^{\frac
1{(n+1)}}(1+ 2n\log (QQ')  )}\\  &\ge &\frac{|A||B|}{(4T)^{n+2}(QQ')^{\frac
1{(n+1)}}(1+\log(QQ'))}\times \frac{2^{2n+3}(1+\log(QQ'))}{1+ 2n\log (QQ')   }\\&\ge&
\frac{|A||B|}{(4T)^{n+2}(QQ')^{\frac
1{(n+1)}}(1+\log(QQ'))}.\end{eqnarray*}
\end{proof}
The well known upper bound for the divisor function, $$\tau(m)\le \exp((\log 2+o(1))\log m/\log \log m)$$ 
implies $$T\le \exp((\log 2+o(1))\log (QQ')/\log\log (QQ')).$$
Thus, an optimal choice of $n$ in Lemma \ref{n} is $n\sim
\sqrt{\frac{\log \log (QQ')}{\log 2}}$, from where Theorem \ref{main} follows.

\subsection*{Acknowledgements}This work was supported by grants MTM 2011-22851 of MICINN and ICMAT Severo
Ochoa project SEV-2011-0087.


\begin{thebibliography}{9}
\bibitem{BKS} J. Bourgain, S. Konyagin, I. Shparlinski, Product sets of Rationals, Multiplicative Translates of Subgroups in Residue Rings, and Fixed Points of the Discrte Logarithm. \emph{International Mathematical Research Notices}, vol 2008. 1--29.
\bibitem{CG} J. Cilleruelo and M. Garaev, Congruences involving product of intervals and sets with small multiplicative doublings modulo a prime. \emph{Preprint.}
\bibitem{CRR} J. Cilleruelo, S. Ramana and O. Ramar\'{e}, The number of rational numbers determined by large sets of integers.
\emph{Bull. of the London Math. Soc.}  vol 42,  nº3 (2010)
\end{thebibliography}

\end{document}